\newcounter{thecounter}
\numberwithin{thecounter}{section}
\theoremstyle{theorem}
\newtheorem{lemma}[thecounter]{Lemma}
\newtheorem{theorem}[thecounter]{Theorem}
\newtheorem{prob}[thecounter]{Problem}
\newtheorem{coro}[thecounter]{Corollary}
\theoremstyle{definition}
\newtheorem{definition}[thecounter]{Definition}
\newtheorem{remark}[thecounter]{Remark}
\DeclareMathOperator{\lk}{lk}
\DeclareMathOperator{\Ast}{ast}
\newcommand{\diam}{\mathrm{diam}}
\newcommand{\s}{\Sigma}
\newcommand{\set}[2]{\left\{#1\ \middle\vert\ #2\right\}}
\begin{document}
\title{On flag-no-square $4$-manifolds}
\author{Daniel Kalmanovich}
\address{School of Computer Science\\ Ariel University\\ Ariel~4070000\\ Israel.}
\email{danielk@ariel.ac.il}
\author{Eran Nevo}
\address{Einstein Institute of Mathematics\\
 Hebrew University\\ Jerusalem~91904\\ Israel.}
\email{nevo@math.huji.ac.il}
\thanks{E.N. was partially supported by the Israel Science Foundation grant ISF-2480/20 and by ISF-BSF joint grant 2016288.}
\author{Gangotryi Sorcar}
\address{Department of mathematical sciences\\ University of Delaware\\ Newark, DE~19716, USA.}
\email{gangotryi@gmail.com}

\begin{abstract}
Which $4$-manifolds admit a flag-no-square (fns) triangulation? We introduce the ``star-connected-sum" operation on such triangulations, which preserves the fns property, from which we derive new constructions of fns $4$-manifolds. In particular, we show the following: (i) there exist non-aspherical fns $4$-manifolds, answering in the negative a question by Przytycki and Swiatkowski; (ii) for every large enough integer $k$ there exists a fns $4$-manifold $M_{2k}$ of Euler characteristic $2k$, and further, (iii) $M_{2k}$ admits a super-exponential number (in $k$) of fns triangulations - at least $2^{\Omega(k \log k)}$ and at most $2^{O(k^{1.5} \log k)}$.
\end{abstract}

\maketitle

\section{Introduction}
A simplicial complex $\Delta$ is \emph{flag} if its faces are exactly the cliques of its $1$-skeleton. If additionally, $\Delta$ has no induced cycles of length at most $k-1$ then $\Delta$ is \emph{$k$-large}; where $5$-large is also called \emph{flag-no-square}, or fns for short.

It is known that for every simplicial complex of dimension $d\le 3$ there exists a fns complex homeomorphic to it, see Dranishnikov~\cite{Dran99} for $d\le 2$ and Przytycki and Swiatkowski~\cite{PS09} for $d=3$, while the $4$-sphere has no fns triangulation, due to Januskiewicz and Swiatkowski \cite{JS03}; thus, a $d$-manifold has a fns triangulation if $d<4$ and has no fns triangulation if $d>4$ (we consider only compact manifolds without boundary, called simply \emph{manifolds}).   The following question is open:
\begin{prob}
Which triangulable $4$-manifolds admit a flag-no-square triangulation?
\end{prob}
Some lower bounds on the Euler characteristic  $\chi(M)$ of $4$-manifolds $M$ admitting a fns triangulation are known: Kopcz\'{y}nski, Pak and Przytycki~\cite{KPP09} showed that $\chi(\Delta)\ge f_0(\Delta)$ if $\Delta$ is a fns triangulation of a $4$-manifold, where $f_0(\Delta)$ is the number of vertices of $\Delta$. Hence, e.g. $\chi(M)\ge 18$ by passing to links, 
as one can show that the icosahedron, which has $12$ vertices, minimizes the number of vertices among fns triangulations of the $2$-sphere (plus little extra work to reach the $18$ bound), and likely $\chi(M)\ge 603$, which would follow if the boundary of the $600$-cell minimizes the number of vertices among fns triangulations of the $3$-sphere.
On the other hand, very limited constructions of fns $4$-manifolds are known, described in~\cite[4.4(2)]{PS09}: they are all quotients of the regular simplicial tesselation
of the hyperbolic $4$-space with all vertex links isomorphic to the boundary complex of the $600$–cell, by an appropriate subgroup of its (Coxeter) automorphism group; in particular, the resulting fns manifolds are aspherical.

The following two lemmas allow us to construct new examples of fns $4$-manifolds. Suppose $N$ and $M$ are 
two disjoint 
simplicial complexes,
 and suppose there is a combinatorial isomorphism between the vertex links $\lk_v(N)$ and $\lk_u(M)$ with the isomorphism being induced by some bijection on their vertices $\phi:V(\lk_v(N))\rightarrow V(\lk_u(M))$. We will use the same notation for the combinatorial isomorphism between the vertex links and write it as $\phi:\lk_v(N)\rightarrow \lk_u(M)$.
Also recall that for a vertex $v\in M$, the antistar $\Ast_v(M)$ is the subcomplex consisting of all faces of $M$ not containing $v$.

\begin{definition}
    The \textbf{star connected sum} $N\#_{\phi}M$ is the simplicial complex obtained by gluing the antistars $\Ast_v(N)$ and  $\Ast_u(M)$ according to $\phi$, namely we take the union of the antistars and identify $w$ with $\phi(w)$ for all $w\in V(\lk_v(N))$.
\end{definition}

\begin{lemma}\label{lem:star-connected-sum}
Assume $N$ and $M$ are disjoint fns orientable connected $4$-manifolds, $v\in V(N)$, $u\in V(M)$ and $\phi: \lk_v(N) \rightarrow \lk_u(M)$ a combinatorial isomorphism that reverses orientation. Then 

(i) $N\#_{\phi}M$ is an orientable fns $4$-manifold, homeomorphic to the connected sum 
$N\# M$, 
and

(ii) $\chi(N\#_{\phi}M)=\chi(N)+\chi(M)-2$.
\end{lemma}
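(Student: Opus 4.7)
The strategy is to (i) verify that $N\#_\phi M$ is a PL $4$-manifold with underlying space $N\# M$ and that it is flag-no-square, and (ii) apply inclusion-exclusion for the Euler characteristic. Throughout I partition the vertex set of $N\#_\phi M$ as $V_N\sqcup V_M\sqcup V_L$, where $V_L=V(\lk_v(N))=V(\lk_u(M))$ (identified by $\phi$); a key elementary observation is that no edge of $N\#_\phi M$ crosses between $V_N$ and $V_M$, while edges inside $V_L$ are exactly the edges of $\lk_v(N)$ (equivalently of $\lk_u(M)$ via $\phi$, using flagness of $N$).

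For the manifold claim I would compute vertex links. When $w\in V_N$ nothing changes and $\lk_w(N\#_\phi M)=\lk_w(N)$, a PL $3$-sphere; similarly for $w\in V_M$. When $w\in V_L$ one has $\lk_w(N\#_\phi M)=\Ast_v(\lk_w(N))\cup_\phi \Ast_{\phi(w)}(\lk_u(M))$, the union of two PL $3$-balls glued along their common boundary $2$-sphere (the edge-link $\lk_{\{v,w\}}(N)\cong\lk_{\{u,\phi(w)\}}(M)$ via $\phi$), hence again a PL $3$-sphere. Since $\Ast_v(N)$ is $N$ with the open PL $4$-ball $\mathrm{int}(\st_v(N))$ removed, gluing via the orientation-reversing $\phi$ realizes the oriented connected sum $N\# M$.

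Flagness is immediate: a clique cannot have vertices in both $V_N$ and $V_M$, so it is a clique in $N$ or in $M$ and is a face by flagness. For the no-induced-$4$-cycle property I split on the distribution $(n_N,n_M,n_L)$ of a hypothetical induced square. If $n_N=0$ or $n_M=0$ all four vertices lie on one side and the square would be induced in $N$, $M$, or in $\lk_v(N)$ (which is fns as an induced subcomplex of $N$), a contradiction in every case. The lack of edges between $V_N$ and $V_M$ rules out $(3,1,0)$, $(2,2,0)$, $(2,1,1)$ and their mirrors on degree grounds. The substantive case is $(1,1,2)$: opposite corners $a\in V_N$ and $c\in V_M$, remaining corners $\ell_1,\ell_2\in V_L$. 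Then $(a,\ell_1,v,\ell_2)$ is an induced $4$-cycle in $N$: the edges $a\ell_1,a\ell_2,v\ell_1,v\ell_2$ are all edges of $N$; $av$ is not, because $a\notin V_L$; and $\ell_1\ell_2$ is not, for otherwise by flagness of $N$ it would lie in $\lk_v(N)$ and therefore become an edge of $N\#_\phi M$, destroying the original induced square. This contradicts fns of $N$.

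Finally, inclusion-exclusion gives $\chi(N\#_\phi M)=\chi(\Ast_v(N))+\chi(\Ast_u(M))-\chi(\lk_v(N))$; combined with $\chi(N)=\chi(\Ast_v(N))+\chi(\st_v(N))-\chi(\lk_v(N))$ and the facts $\chi(\st_v(N))=1$ (cone) and $\chi(\lk_v(N))=\chi(S^3)=0$, this yields $\chi(\Ast_v(N))=\chi(N)-1$ and similarly for $M$, whence $\chi(N\#_\phi M)=\chi(N)+\chi(M)-2$. The main obstacle is the $(1,1,2)$ case above; the idea of replacing the alien $M$-vertex $c$ by $v$ to manufacture a forbidden square in $N$ is the conceptual heart of the argument and relies on $v$'s universal adjacency to $V_L$ together with flagness of $N$.
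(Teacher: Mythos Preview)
Your proof is correct and follows essentially the same approach as the paper: the heart of the no-square argument---replacing the foreign vertex $c$ by the deleted vertex $v$ (the paper uses $u$ on the $M$-side, symmetrically) to produce a forbidden induced square---is identical, and your flagness argument matches the paper's observation that cliques cannot straddle the two antistars. The only cosmetic differences are that you verify the manifold property via explicit vertex-link computations (the paper just invokes the topological connected sum) and you compute $\chi$ by inclusion--exclusion on face numbers, which the paper mentions in a footnote as the alternative to its Mayer--Vietoris computation.
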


Lemma~\ref{lem:star-connected-sum} implies the following corollary.
\begin{coro}\label{cor:nonaspherical}
There exist non-aspherical $4$-manifolds admitting a fns triangulation.
\end{coro}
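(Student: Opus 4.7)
The plan is to apply Lemma~\ref{lem:star-connected-sum} to two disjoint copies of an aspherical fns orientable $4$-manifold $M_0$, such as one of those constructed in \cite[4.4(2)]{PS09} as quotients of the regular simplicial tessellation of $\mathbb{H}^4$ (passing to the orientation double cover if $M_0$ is not already orientable). The output will be a fns orientable $4$-manifold homeomorphic to $M_0 \# M_0$, and the only remaining task will be to show that $M_0 \# M_0$ is not aspherical.

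First I would take disjoint copies $N, M$ of $M_0$ and pick any vertices $v \in V(N)$ and $u \in V(M)$. Since every vertex link in $M_0$ is combinatorially isomorphic to the boundary complex of the $600$-cell, one has $\lk_v(N) \cong \lk_u(M)$. To arrange the orientation-reversal required by Lemma~\ref{lem:star-connected-sum}, I would take any such combinatorial isomorphism and post-compose it with an orientation-reversing combinatorial self-automorphism of the $600$-cell boundary (for instance, one induced by a reflection in its Coxeter symmetry group $H_4$); call the resulting map $\phi$. Lemma~\ref{lem:star-connected-sum} then produces a fns orientable $4$-manifold $N \#_\phi M$ homeomorphic to $M_0 \# M_0$.

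To see that $M_0 \# M_0$ is not aspherical, write $\Gamma := \pi_1(M_0)$; since $M_0$ is closed aspherical with non-compact universal cover homeomorphic to $\mathbb{H}^4$, the group $\Gamma$ is infinite. Then by van Kampen, $\pi_1(M_0 \# M_0) \cong \Gamma * \Gamma$ is a nontrivial free product of two infinite groups, which has infinitely many ends by Stallings' theorem on ends of groups. On the other hand, the fundamental group of any closed aspherical manifold of dimension at least $2$ is a Poincar\'e duality group of that dimension, and such PD$(n)$-groups for $n \ge 2$ are one-ended. This contradiction forces $M_0 \# M_0$ to be non-aspherical. The hard part, if anywhere, lies only in invoking the standard facts that the $600$-cell boundary admits orientation-reversing combinatorial automorphisms and that PD$(n)$-groups for $n \ge 2$ are one-ended; both are well-documented, so I expect the proof to go through essentially as sketched.
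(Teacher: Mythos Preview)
Your proof is correct and follows essentially the same approach as the paper: take the star connected sum of two disjoint copies of an orientable fns aspherical $4$-manifold from \cite[4.4(2)]{PS09}, invoke Lemma~\ref{lem:star-connected-sum} to conclude the result is fns and homeomorphic to the connected sum, and then observe that a nontrivial connected sum of closed aspherical manifolds is never aspherical. The only difference is in justifying this last fact: the paper simply cites it (to McCullough~\cite{McCullough} and a MathOverflow answer), whereas you give a self-contained argument via Stallings' ends theorem and one-endedness of Poincar\'e duality groups---both routes are standard.
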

This settles in the negative Question 5.8(1) in~\cite{PS09}. 

Next we use a handle-type construction, similar to the star connected sum:
given a connected simplicial manifold $N$ admitting two vertices $v$ and $u$ of graph distance at least five with 
isomorphic vertex links $\lk_v(N)$ and $\lk_u(N)$, denote by 
$\phi:V(\lk_v(N))\rightarrow V(\lk_u(N))$ a bijection that induces an isomorphism of the links.
Under these conditions, 

\begin{definition} 
    The \textbf{star handled} $N$, denoted by $h_{\phi}(N)$, is the simplicial complex\footnote{The condition on the graph distance $d(u,v)\ge 5$ guarantees that $h_{\phi}(N)$ is indeed a simplicial complex.} obtained from the subcomplex $(N-v)-u$ of $N$ by identifying the links $\lk_v(N)$ and  $\lk_u(N)$ according to $\phi$, namely we take the quotient of $(N-v)-u$ by identifying $w$ with $\phi(w)$ for all $w\in V(\lk_v(N))$. 
\end{definition}
Note that topologically $h_{\phi}(N)$ is homeomorphic to $N$ with a (hollow) $1$-handle attached. The following Lemma follows.


\begin{lemma}\label{lem:handle-connected-sum}
Let $N$ be a fns orientable connected $4$-manifold and $v,u\in V(N)$ of distance at least seven in the graph metric of the $1$-skeleton of $N$,  and $\phi: \lk_v(N) \rightarrow \lk_u(N)$ a combinatorial isomorphism that reverses orientation. Then 

(i) $h_{\phi}(N)$ is a fns $4$-manifold, 
homeomorphic to  $N$ with a hollow $1$-handle attached,
and

(ii) $\chi(h_{\phi}(N))=\chi(N)-2$.
\end{lemma}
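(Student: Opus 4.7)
The plan is to adapt the proof of Lemma~\ref{lem:star-connected-sum}, interpreting $h_\phi(N)$ as the quotient of the induced subcomplex $X:=(N-v)-u$ of $N$ obtained by identifying $\lk_v(N)$ with $\lk_u(N)$ via $\phi$. Since $X$ is an induced subcomplex of the fns complex $N$, it is itself fns. The assumption $d(v,u)\ge 7$ in particular forces $\lk_v(N)$ and $\lk_u(N)$ to be vertex-disjoint subcomplexes of $X$ whose vertex sets lie at graph distance at least $5$ in $N$, which is more than enough for the quotient to be a simplicial complex.

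For the topological and Euler-characteristic parts of the statement, I would verify that $h_\phi(N)$ is an orientable $4$-manifold as follows. A non-identified vertex inherits its link from $N$, still a PL $3$-sphere. For an identified class $[w]=[\phi(w)]$ with $w\in V(\lk_v(N))$, the link in $h_\phi(N)$ is the union of the two PL $3$-balls $\Ast_v(\lk_w(N))$ and $\Ast_u(\lk_{\phi(w)}(N))$ glued along their common boundary $2$-sphere via the restriction of $\phi$, hence a PL $3$-sphere. Orientability follows because $\phi$ reverses orientation, and the homeomorphism of $h_\phi(N)$ with $N$ with a hollow $1$-handle attached is immediate from the construction (remove the open stars of $v$ and $u$ from $N$ and identify the two resulting $S^3$ boundaries by $\phi$). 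The Euler characteristic in~(ii) is a direct count: $\chi(X)=\chi(N)-2$ since the stars contribute $\chi=1$ and the $3$-sphere links contribute $\chi=0$, and the identification subtracts one further copy of $\chi(\lk_v(N))=0$, giving $\chi(h_\phi(N))=\chi(N)-2$.

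The bulk of the work is to check that $h_\phi(N)$ remains fns. My plan is to lift a potential clique, respectively a potential induced $4$-cycle, of $h_\phi(N)$ back to $X$ and then apply the fns property of $N$. Each identified class has two lifts, a ``$v$-side'' lift in $\lk_v(N)$ and a ``$u$-side'' lift in $\lk_u(N)$; edges incident to an identified class may force which side to choose. Lifting succeeds unless the clique or cycle mixes $v$-side and $u$-side forcings, and any such mixing yields a walk in $N$ from $v$ to $u$ of length at most $6$, contradicting $d(v,u)\ge 7$. Once a consistent lift to $X$ exists, the face (for flagness) or the chord (for ruling out the induced $4$-cycle) provided by fns of $N$ descends to $h_\phi(N)$.

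The main obstacle I expect is the case analysis in this side-crossing argument, and in particular checking sharpness of the hypothesis $d(v,u)\ge 7$: an induced $4$-cycle with two identified and two non-identified vertices lying on opposite sides witnesses a walk $v\to w\to\cdots\to\phi(w')\to u$ in $N$ of length exactly $6$, so the bound $7$ is tight. Enumerating the remaining configurations of an induced $4$-cycle, and verifying that each either lifts to $X$ or forces such a short $v$-to-$u$ walk, is the principal computation needed to finish the proof.
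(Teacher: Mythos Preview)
Your plan is correct and matches the paper's approach: the paper compresses your lifting argument into a single sentence, observing that any new induced cycle in $h_\phi(N)$ arises from an induced path $w\cdots\phi(w)$ in $N$ whose extension $v,w,\cdots,\phi(w),u$ has length at least $7$, so the cycle has length at least $5$ (for~(ii) the paper invokes Mayer--Vietoris where you count directly; both are routine). One small correction to your anticipated case analysis: the configuration that makes the bound $d(v,u)\ge 7$ sharp is an induced $4$-cycle with exactly \emph{one} identified vertex $a$ and three non-identified vertices $b,c,d$, where $ab$ and $da$ lift to opposite sides, producing the length-$6$ walk $v,a_v,b,c,d,a_u,u$; the two-identified/two-non-identified configuration you name is instead eliminated by exhibiting an induced square through $v$ (or $u$) in $N$, not by the distance hypothesis.
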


Let us elaborate a bit on the construction scheme in~\cite[Remark 4.4(2)]{PS09}: it uses 
  the $5$-large tessellation $T$ of the hyperbolic $4$-space $\mathbb{H}^4$ by regular simplices where each vertex link is $X_{600}$, the boundary of the $600$-cell. The automorphism group of $T$ is the Coxeter group $G=[5,3,3,3]$. By Selberg's lemma \cite{s60}, any finitely generated linear group over a field of characteristic zero has a finite index torsion free subgroup, hence $G$ has a finite index torsion free subgroup $H$. Also, any Coxeter group is residually finite by Malcev's theorem \cite{m40} and therefore so is $H$ since subgroups inherit residual finiteness. As $H$ is residually finite, it has a subgroup $K$ such that the minimal displacement of the $K$-action on $T$ is arbitrarily large, in particular $\inf\{d(v,kv): v \in V(T), k \in K\}>5$ where $d(.,.)$ is the usual graph metric on the $1$-skeleton of $T$. Therefore, the quotient $\mathbb{H}^4/K$ is a $5$-large simplicial complex homeomorphic to a closed hyperbolic $4$-manifold.
  One can also assume that these manifolds are orientable simply by replacing $K$ with its index $2$ subgroup containing only the orientation preserving elements of $K$. 
  


Since closed hyperbolic $4$-manifolds have even Euler characteristic, all of these fns manifolds have even Euler characteristic. The question arises: which positive integers can be realized as the Euler characteristic of some fns $4$-manifold? 

By iterative use of Lemmas~\ref{lem:star-connected-sum} and~\ref{lem:handle-connected-sum}, starting with manifolds as above obtained from subgroups with minimal displacement at least 13, we obtain Corollary~\ref{cor:highEvenXhi} -- partially answering this question, by providing new constructions.

\begin{coro}\label{cor:highEvenXhi}

For all even $s$ large enough there exists a $4$-manifold $G_s$, admitting a fns triangulation, with $\chi(G_s)=s$.
\end{coro}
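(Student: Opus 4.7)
The strategy is to build a base orientable fns hyperbolic $4$-manifold $N$ via the Selberg--Malcev scheme recalled above, inflate its Euler characteristic by iterated star connected sums of copies of $N$ (Lemma~\ref{lem:star-connected-sum}), and then reduce it by $2$ at a time via star handles (Lemma~\ref{lem:handle-connected-sum}) until we hit any prescribed large even integer $s$.

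\medskip

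Fix an orientation-preserving torsion-free subgroup $K$ of $G=[5,3,3,3]$ whose action on the tessellation $T$ has minimal displacement at least $13$, and (using residual finiteness) further pass to a normal finite-index subgroup so that the quotient $N := T/K$ is as large as we wish. Then $N$ is an orientable fns $4$-manifold; every vertex link of $N$ is isomorphic to the boundary $X_{600}$ of the $600$-cell; the combinatorial ball of radius $6$ around every vertex of $N$ embeds isometrically in $T$; and $N$ contains many pairs of vertices at mutual graph distance at least $7$. Write $\chi_0 := \chi(N) = 2c$, an even positive integer with $c \ge 2$.

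\medskip

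Given an even integer $s = 2m$ with $m \ge c$, set $k := \lceil (m-1)/(c-1)\rceil$ and form the iterated star connected sum $N^{\# k}$ of $k$ disjoint copies of $N$ via Lemma~\ref{lem:star-connected-sum}. At each step both links to be glued are copies of $X_{600}$, and since $X_{600}$ admits orientation-reversing combinatorial automorphisms (e.g.\ any reflection in its Coxeter symmetry group $H_4$), a gluing $\phi$ that reverses orientation exists. The lemma yields
\[
\chi(N^{\# k}) \;=\; k\chi_0 - 2(k-1) \;=\; 2\bigl(k(c-1) + 1\bigr).
\]
Now apply Lemma~\ref{lem:handle-connected-sum} exactly $j := k(c-1) + 1 - m$ times. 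By the choice of $k$, $j \in \{0, 1, \dots, c-2\}$, so only boundedly many handles are needed. Each is attached at a pair of vertices of the current manifold that are at graph distance $\ge 7$ and whose $6$-neighborhoods still embed in $T$, so their links are isomorphic copies of $X_{600}$ and $\phi$ may again be chosen orientation-reversing. The resulting orientable fns $4$-manifold $G_s$ satisfies $\chi(G_s) = 2m = s$, proving the corollary.

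\medskip

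\textbf{Main obstacle.} The subtle step is ensuring that in $N^{\# k}$, and after each subsequent handle, one can still find two vertices at graph distance at least $7$ whose $6$-neighborhoods embed isometrically in $T$. This is handled by the minimal displacement $\ge 13$ condition: any vertex at distance $\ge 7$ from every past identification locus retains its original link $X_{600}$ and its local distances from the copy of $N$ it sits in (a shortest path connecting two such vertices that leaves this copy would have to traverse a gluing region and accumulate length $\ge 14$, so it stays local). Since $N$ may be taken to have arbitrarily many such ``generic'' vertices and each operation consumes only a bounded number of them, the construction proceeds to completion.
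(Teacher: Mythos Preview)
Your argument is correct and uses the same two lemmas as the paper, but you apply them in the opposite order. The paper first performs a \emph{single} star-handle on $M$ to produce a second building block $h(M)$ with $\chi(h(M))=\chi(M)-2$, and then realizes every large even $s$ as the Euler characteristic of a \emph{row} of star connected sums of copies of $M$ and $h(M)$ (choosing how many of each to hit $s$ exactly). You instead first take the $k$-fold star connected sum $N^{\#k}$ and then perform a bounded number $j\le c-2$ of star-handles to decrement $\chi$ down to $s$.

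Both routes are valid. The paper's version is a bit tidier: only one handle is ever needed, and all gluing vertices are located once and for all along a fixed diameter path in $M$, so no ``main obstacle'' paragraph is required. Your version has the pleasant feature that the number of handle operations is uniformly bounded (independent of $s$), but you then have to argue---as you do---that after each handle one can still find a pair of $X_{600}$-link vertices at graph distance $\ge 7$; your justification for this is correct in spirit, though the constants could be sharpened.

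One small point: to iterate Lemma~\ref{lem:handle-connected-sum} you need each intermediate $h_\phi(\cdot)$ to remain \emph{orientable}, which that lemma does not explicitly assert. This does follow from the orientation-reversing hypothesis on $\phi$ (the result is $N\#(S^1\times S^3)$ rather than the twisted version), and in fact the paper's own proof tacitly uses the same fact when it feeds $h(M)$ into Lemma~\ref{lem:star-connected-sum}; still, it is worth saying explicitly.
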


Given Corollary~\ref{cor:highEvenXhi}, it is natural to ask how many combinatorially distinct fns triangulations can a connected $4$-manifold admit. 
By the result of~\cite{KPP09} mentioned above, that $\chi(\Delta)\ge f_0(\Delta)$, indeed there are only finitely many such triangulations. We give a better estimate:
let $t(M)$ be the number of combinatorial types of fns triangulations of a given connected $4$-manifold $M$, and let 
$$t(x)=\sum_{\chi(M)=x} t(M)$$ where the summation runs over all connected $4$-manifolds of Euler characteristic $x$. 

\begin{theorem}\label{cor:count-fns}
\begin{enumerate}[(i)]
    \item For all even $k$ large enough there exists a $4$-manifold $M_k$ with $\chi(M_k)=\Theta(k)$ that has at least $k!$ combinatorially distinct fns triangulations. Thus:
    $$t(M_k)=2^{\Omega(\chi(M_k)\log \chi(M_k))}.$$
    
    \item There exists a constant $b>0$ such that for all $x$ large enough: $$t(x) < b^{x^{3/2} \log x}.$$
\end{enumerate}
\end{theorem}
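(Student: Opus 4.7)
For part (i), I would iterate Lemma~\ref{lem:handle-connected-sum} on a carefully chosen base manifold. Specifically, take a hyperbolic fns $4$-manifold $N$ from the Coxeter construction recalled above, with $\chi(N)=\Theta(k)$ and minimal displacement at least $14$, so that $N$ contains $2k$ vertices $v_1,\ldots,v_k,u_1,\ldots,u_k$ at pairwise graph distance $\geq 7$, all with vertex links isomorphic to $X_{600}$. For each $\sigma\in S_k$, I would iteratively attach $k$ star-handles, the $i$-th joining $v_i$ to $u_{\sigma(i)}$ via a fixed orientation-reversing link isomorphism $\phi_i^\sigma$. By Lemma~\ref{lem:handle-connected-sum}, each resulting $M_\sigma$ is an fns orientable $4$-manifold homeomorphic to $N$ with $k$ hollow $1$-handles attached, having $\chi(M_\sigma)=\chi(N)-2k=\Theta(k)$. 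Set $M_k$ to be this common topological type; it then remains to show that distinct $\sigma$ yield combinatorially non-isomorphic triangulations, which I would do by recovering $\sigma$ from the combinatorial structure modulo $\mathrm{Aut}(N)$. Arranging $\mathrm{Aut}(N)$ to be small (by choosing the torsion-free subgroup $K$ of the Coxeter group $[5,3,3,3]$ generically) then yields $\geq k!$ distinct fns triangulations.

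For part (ii), my plan is to prove that $f_1(\Delta)=O(n^{3/2})$ for any fns $4$-manifold $\Delta$ on $n=f_0(\Delta)$ vertices, then count $1$-skeletons. The Kopczy\'nski-Pak-Przytycki inequality gives $n\leq\chi(\Delta)=x$, and the flag property means $\Delta$ is determined by its $1$-skeleton. The key structural observation is that for any non-adjacent vertices $u,v$, the common neighborhood $N(u)\cap N(v)$ must be a clique: two non-adjacent common neighbors $w_1,w_2$ would make $\{u,w_1,v,w_2\}$ span an induced $4$-cycle, contradicting fns. Since $\Delta$ is a flag $4$-manifold, cliques contain at most $5$ vertices, hence $|N(u)\cap N(v)|\leq 5$ for every non-adjacent pair.

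Combining this with
\[
\sum_{v}\binom{\deg v}{2}=3f_2+\sum_{uv\notin E}|N(u)\cap N(v)|\leq 3f_2+5\binom{n}{2},
\]
the Cauchy-Schwarz lower bound $\sum_v\binom{\deg v}{2}\geq 2f_1^2/n-f_1$, and the estimate $f_2\leq 2f_4=O(n^2)$ (the first inequality because fns forces every triangle into at least five tetrahedra, the second by the upper bound theorem for simplicial $4$-manifolds), yields $f_1=O(n^{3/2})$. The number of graphs on $n$ vertices with at most $Cn^{3/2}$ edges is at most $\binom{\binom{n}{2}}{Cn^{3/2}}\leq 2^{O(n^{3/2}\log n)}$; summing over $n\leq x$ and absorbing constants into the base gives $t(x)<b^{x^{3/2}\log x}$ for appropriate $b>1$ and all large $x$. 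The main obstacles I expect are: on the lower-bound side, controlling $\mathrm{Aut}(N)$ so that the $k!$ handle-matchings produce pairwise non-isomorphic triangulations; on the upper-bound side, the $f_1=O(n^{3/2})$ bound, which depends on the manifold version of the upper bound theorem rather than its sphere version.
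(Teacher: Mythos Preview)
There is a genuine gap at the reconstruction step, and this is where essentially all of the work in the paper lies. You propose to ``recover $\sigma$ from the combinatorial structure modulo $\mathrm{Aut}(N)$,'' but you do not say how. After the $k$ star-handle operations the complex $M_\sigma$ no longer contains $N$ as a subcomplex: $2k$ vertices have been deleted and $k$ pairs of $X_{600}$-links identified. There is no evident combinatorial invariant that picks out which copies of $X_{600}$ inside $M_\sigma$ are the glued links, nor one that recovers a labelling of the attachment sites sufficient to pin down $\sigma$. Arranging $\mathrm{Aut}(N)$ to be trivial (which you assert can be done ``generically'' but do not justify) would not by itself help: you would still need to distinguish, say, the site that was $v_1$ from the site that was $v_2$ \emph{inside $M_\sigma$}, and nothing in your setup does this.

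The paper's construction is engineered precisely to make the reconstruction possible with no hypothesis on automorphisms. The blocks $E_1,\ldots,E_k$ are strung in a row between two tails $R_{15k}$ and $R_{14k}$ of \emph{different} lengths, so that any diameter-realizing pair of vertices must lie one in each tail; this yields a canonical left-to-right orientation of the row. Walking along a suitable shortest path between the ends, one detects the ``junctions'' (copies of $M$ glued on all three distinguished vertices $u,v,w$) by a local metric criterion: the sphere of radius $\lceil\tfrac{3}{2}d\rceil$ around a junction vertex decomposes into three well-separated clusters (Lemma~\ref{lem:3components}). This produces canonical vertices $u_1,v_1,\ldots,u_k,v_k$ in order along the row, after which $\sigma(i)=j$ is read off from the distance $d(u_i,v_j)$ being small (at most $7d+51$) precisely when a handle joins $E_i$ to $E_j$. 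None of these ingredients appear in your sketch.

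\textbf{Part (ii).} Here your argument is correct and reaches the same bound $f_1=O(n^{3/2})$ as the paper, by a somewhat different route. The paper simply quotes a result of Gy\'arf\'as--Hubenko--Solymosi: a graph on $n$ vertices with no induced $C_4$ and clique number at most $5$ has at most $\tfrac{5}{\sqrt{2}}\,n^{3/2}$ edges. Your derivation reproves this in the manifold setting: the observation that non-adjacent vertices have at most $5$ common neighbours handles the non-edge contribution to $\sum_v\binom{\deg v}{2}$, and you control the triangle term $3f_2$ via $f_2\le 2f_4$ (using that the link of a $2$-face in an fns $4$-manifold is a cycle of length $\ge 5$) together with $f_4=O(n^2)$ from the Upper Bound Theorem for homology manifolds (Novik), which is indeed available and not an obstacle. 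The final graph-counting step is then identical to the paper's. Your route is more self-contained but trades the external graph-theory citation for the UBT; the paper's is shorter.
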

The proof of item (i), given in Section~\ref{sec:symm}, is based on a 
technique, which may be of independent interest: there, although we know very little about the fns $4$-manifolds constructed in~\cite{PS09}, we can still take many copies of one such construction, and by carefully applying the 
star-connected-sum and star-handle
operations to them, guarantee that we obtain factorially many non-isomorphic fns triangulations of the same  $4$-manifold.
The proofs of the other results are given in Section~\ref{sec:Proofs}. We conclude in Section~\ref{sec:Open} with related open problems.

\section{Proofs}\label{sec:Proofs}
First, we note that there exists an orientation reversing combinatorial isomorphism $\phi:X_{600}\rightarrow X_{600}$, as $X_{600}$ can be embedded in $\mathbb{R}^4$ in a way that if $(x,y,z,w)$ is a vertex, then so is $(x,y,z,-w)$; see~\cite[p. 247]{Cox73}). The above map induces a degree $-1$ map on the $3$-sphere $X_{600}$ (see e.g.~\cite[p. 134]{Hat02}), thus it is orientation reversing.
We will use such an orientation reversing isomorphism throughout the proofs below.
\begin{proof}[Proof of Lemma~\ref{lem:star-connected-sum}]
Note that $\Ast_v(N)$ and  $\Ast_u(M)$ are homeomorphic to $N\setminus B_N$ and $M\setminus B_M$ respectively where $B_N$ and $B_M$ are two open $4$-balls in $N$ and $M$ respectively. Also, the combinatorial isomorphism $\phi$ is a homeomorphism between $\partial B_N$ and $\partial B_M$.\footnote{Note that by choosing appropriate orientations on $N$ and $M$, for any link isomorphism $\phi$ one can assume that $\phi$ is orientation reversing.}
Hence, the star connected sum $N\#_\phi M$ is homeomorphic to the usual connected sum $N\# M$. The connected sum $N\# M$ is well-defined, that is it does not depend on the choice of the balls $B_N$ and $B_M$ or the orientation reversing homeomorphism between their boundaries $\partial B_N$ and $\partial B_M$. Hence, if different vertices are chosen in $N$ and $M$ respectively and a different orientation reversing link isomorphism is used to obtain the star connected sum, the resulting manifold will be homeomorphic to $N\#_{\phi}M$. The well-definedness of $N\#M$ is due to Quinn's Annulus Theorem in dimension $4$ (the Annulus Theorem was gradually proved earlier in all other dimensions by others) and a sketch of its proof can be found in Lee Mosher's answer to a MathOverflow question by Antoine Chambert-Loir (see~\cite{mathoverflow:LeeMosh}).



 Note that $\Ast_v(N)$, $\Ast_u(M)$, $\lk_v(N)$, and $\lk_u(M)$ are flag-no-square since they are induced subcomplexes of the fns simplicial complexes $N$ and $M$ respectively. 
 Note that the union $A\cup B$ of two flag complexes $A$ and $B$, whose intersection $A\cap B$ is an \emph{induced} subcomplex in both, must be flag as well; thus $N\#_{\phi}M$ is flag.
 Now, assume by contradiction that there is an induced square, namely $4$-cycle, $abcd$ in $N\#_{\phi}M$. As $abcd \not\subseteq \Ast_u(M)$ and $abcd \not\subseteq \Ast_v(N)$, this induced square has to have exactly one vertex in $\Ast_u(M) \setminus \Ast_v(N)$ and exactly one vertex in $\Ast_v(N) \setminus \Ast_u(M)$; as shown in the following figure.

$$\includegraphics[scale=1]{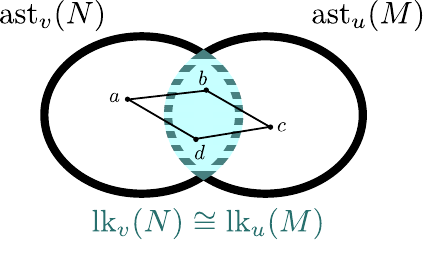}$$

So if the square has vertices $a$, $b$, $c$, and $d$, then without loss of generality, we can assume that $a \in \Ast_v(N)\setminus \lk_v(N)$, $c \in \Ast_u(M)\setminus \lk_u(M)$ and $b, d \in \lk_u(M)=\lk_v(N)$ (under their identification). 
But then $ubcd$ would be an induced square in $M$, a contradiction.
This proves item (i).



    
    Item (ii) follows directly from (i), by applying the Mayer--Vietoris sequence to our connected sum. Indeed, the Betti numbers satisfy 
    $$\beta_i(N\# M)=\beta_i(N)+\beta_i(M)$$ 
    for $i=1,2,3$, and $\beta_0(N\# M)=1=\beta_4(N\# M)$.\footnote{Alternatively, instead of computing Euler characteristic via Betti numbers, one could compute it via face numbers.}
\end{proof}

\begin{proof}[Proof of Corollary \ref{cor:nonaspherical}]
Let $M$ be a fns $4$-manifold from the examples in~\cite[4.4(2)]{PS09}, and let $M_1$ and $M_2$ be two disjoint copies of $M$. We consider $M_1\#_{\phi}M_2$, where $\phi:\lk_{u_1}(M_1)\rightarrow \lk_{u_2}(M_2)$ is an orientation reversing isomorphism, and $u_i$ is any vertex in $M_i$, $i=1,2$. By Lemma~\ref{lem:star-connected-sum}, we get that $X=M_1\#_{\phi}M_2$ is fns. 
Being the connected sum of two aspherical manifolds, it is known that $X$ is not aspherical; see e.g.~\cite[p. 20, Theorem 2.1]{McCullough} for a much stronger result, or a short proof by Robert Bell on MathOverflow~\cite{mathoverflow:RobertBell}.
\end{proof}




\begin{proof}[Proof of lemma~\ref{lem:handle-connected-sum}]
We know that $h_\phi(N)$ is homeomorphic to $N$ with a hollow $1$-handle attached.
To see that $h_\phi(N)$ is fns, note that all induced cycles in $h_\phi(N)$ formed by identifying the ends $w$ and $\phi(w)$ in an induced path $w\ldots\phi(w)$ in $N$ have length at least $5$, as the corresponding induced path $vw\ldots\phi(w)u$ in $N$ has length at least $7$ by assumption. This proves item (i). 
The Euler characteristic formula $\chi(h_{\phi}(N))=\chi(N)-2$ follows from straightforward computations using the Mayer--Vietoris sequence, proving item (ii).
\end{proof}

\begin{proof}[Proof of Corollary~\ref{cor:highEvenXhi}]
Let $M=T/K$ where $T$ is a $5$-large tessellation of $\mathbb{H}^4$ by regular $4$-simplices where all vertex links are isomorphic to the boundary of the 600-cell and 
$K$ is a torsion-free subgroup of $G=[5,3,3,3]$ of finite index with minimal displacement at least 8; 
such $T$ and $K$ exist as explained in the Introduction, and the resulted $M$ is a connected fns $4$-manifold of the construction scheme in~\cite{PS09}. 

Now, let $v$ and $u$ be diametrically apart in the $1$-skeleton $G$ of $M$, and let $P=vabv'c\ldots du'eu$ be an induced path realizing the diameter of $G$.
Let $h(M):=h_\phi(M)$ be the star handled $M$ obtained from an orientation reversing isomorphism $\phi:\lk_v(M)\longrightarrow \lk_u(M)$.
Let $\chi(M)=m$. Then $h(M)$ is fns with $\chi(h(M))=m-2$ by Lemma~\ref{lem:handle-connected-sum}. 
We use $M$ and $h(M)$ as building blocks to construct our desired manifolds $G_s$
for all $s\ge \frac{(m-6)(m-4)}{2}+2$. 

Note that in $h(M)$ the vertices $v'$ and $u'$ have disjoint links, each is isomorphic to the boundary of the 600-cell.  

Take a ``row" of $k$ copies of $h(M)$ with 
$$k(m-4)+2\leq s< (k+1)(m-4)+2,$$ 
namely, an iterated star connected sum 
that glues the link of the vertex $v'$ of the $i$th copy to the link of the vertex $u'$ of the $(i+1)$th copy, for $1\le i\le k-1$. 
Denote the resulted fns manifold by $M'$.

If equality $k(m-4)+2=s$ above holds then $\chi(M')=s$ by Lemma~\ref{lem:star-connected-sum}, and we are done. 
Otherwise, replace (the last) $\frac{s-k(m-4)-2}{2}$ copies of $h(M)$ in the row with copies of $M$, where we keep gluing the link of $v'$ in the $i$th copy (of $M$ or of $h(M)$) to the link of $u'$ in the $(i+1)$th copy in the row for all $i$ but the last one. For this to be possible we require $k\ge\frac{m-6}{2}$. Denote the resulted fns manifold by $G_s$. Again, by Lemma~\ref{lem:star-connected-sum}, $\chi(G_s)=s$. 
 
To summarize, we constructed connected fns $4$-manifolds $G_s$ with $\chi(G_s)=s$ for all even $s\ge \frac{(m-6)(m-4)}{2}+2$.

\end{proof}

\begin{proof}[Proof of Corollary~\ref{cor:count-fns}(ii)]
        
We will give an upper bound on the number of graphs in a family $F$ of graphs which contains all graphs that occur as $1$-skeletons of fns $4$-manifolds with Euler characteristic $x$. Recall that the latter contains only graphs with at most $x$ vertices by~\cite{KPP09}.
         
Every fns $4$-manifold is determined by its $1$-skeleton $G$ (by flagness), and (i) $G$ has no induced $C_4$ and (ii) $G$ has largest clique size $5$. From~\cite[Theorem 1]{GyarHS02} it follows that a graph with $n$ vertices satisfying (i) and (ii) has at most $\frac{5}{\sqrt 2}n^{1.5}$ edges.

Let 
$$F=\set{G}{|V(G)|\le x, |E(G)|\le \frac{5}{\sqrt 2}|V(G)|^{1.5}}.$$ 
Then
\begin{align*}
t(x)&\le |F|= \sum_{y\le x}\sum _{i\leq \frac{5}{\sqrt 2}y^{1.5}}\binom{\binom{y}{2}}{i}< x \sum _{i\leq \frac{5}{\sqrt 2}x^{1.5}}\binom{\binom{x}{2}}{i}
\end{align*}
Bounding the RHS from above, using $x^2>\binom{x}{2}$  
and $x^2/2 \geq \frac{5}{\sqrt 2}x^{1.5}$, which holds for 
$x\ge 50$, we obtain by estimating the binomial coefficients:
\begin{align*}
t(x)&<\frac{5}{\sqrt 2}x^{2.5} \binom{x^2}{\frac{5}{\sqrt 2}x^{1.5}}<\frac{5}{\sqrt 2}x^{2.5} x^{5\sqrt{2} x^{1.5}}\\ \\
&< 5 x^{5\sqrt{2} x^{1.5}+2.5}< 5 x^{8 x^{1.5}}= e^{8 x^{1.5}\cdot \ln{x}+\ln{5}}< e^{9 x^{1.5}\cdot \ln{x}}
\end{align*}

   
        
        
and hence we obtain the claimed bound with constant $b=e^9$.
\end{proof}

\section{Proof of Theorem~\ref{cor:count-fns}(i): $4$-manifolds with factorially many different fns triangulations}\label{sec:symm}

The idea behind our construction is to realize geometrically the cycle structure of a permutation. We construct certain decorated building blocks to represent $1,\dots,k$, glue them in a row via the star connected sum operation, and then we further glue a decoration of the block corresponding to $i$ to a decoration of the block corresponding to $j$ whenever $\sigma(i)=j$, for $\sigma\in S_k$. We then show that the triangulations $T_{\sigma}$ and $T_{\pi}$ thus obtained are combinatorially distinct whenever $\sigma\neq\pi$, for permutations $\sigma,\pi\in S_k$.

\subsection*{Step 1: Building blocks and the skeleton $N_k$}
Let $M$ be our starting connected $4$-manifold, denote $\chi(M)=n$, and let $T_M$ be its fns triangulation, and $G:=G(T_M)$ the graph ($1$-skeleton) of $T_M$. Consider two vertices $u,v\in V(G)$ diametrically apart, and denote  $d(u,v)=\text{diam}(G)=:d$. 
We can have $d$ as large as we want, 
by letting $M=\mathbb{H}^4/K$ for an appropriate subgroup $K$ as explained in the Introduction, and in the resulted triangulation $T_M$ all vertex links are isomorphic to the boundary of the $600$-cell. How large we need $d$ to be will be determined along the proof.
Let $w$ be a third vertex (approximately) half way between $u$ and $v$. Explicitly: take $w$ on a path of length $d$ from $u$ to $v$ so that $d(u,w)=\lceil\frac{d}{2}\rceil$ and $d(v,w)=\lfloor\frac{d}{2}\rfloor$.

The \textbf{row} $R_a$ of length $a$ is the fns $4$-manifold~\footnote{We abuse notation here and denote the different copies of $T_M$ the same.}
$$
R_a:=\underbrace{T_M\#T_M\#\dots\#T_M}_{a\text{ times}}
$$
obtained by gluing $a$ disjoint copies of $T_M$ in a row, gluing the second copy along (the link of) its $v$ to the (link of) $v$ of the first copy, gluing the third copy along its $u$ to the $u$ of the second copy, and so on. A {\bf crucial} choice here is to perform each of the gluings with the identity map induced on the link of the vertices where the gluing occurs, that is, each neighbor $v'$ of $v$ of the first copy is identified with $v'$ of the second copy, and similarly with the other gluings. Slightly abusing notation, we denote the ``free" vertex (it is $u$ if $a$ is even, and it is $v$ when $a$ is odd) of the $a$-th copy by $v$, and thus we have {\bf the $u$ of $R_a$} and {\bf the $v$ of $R_a$}:
\begin{figure}[htbp]
\begin{center}
\includegraphics[scale=0.4]{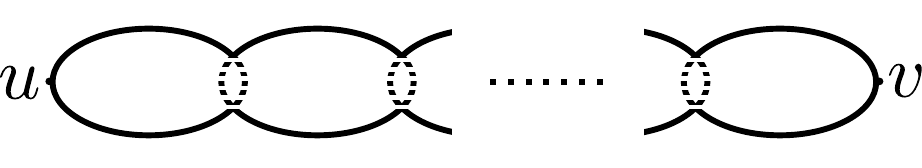}
\end{center}
\end{figure}

Let us note here that our crucial choice of gluing allows us to have a good estimate on the diameter of a row $R_a$:
$$
2(d-1)+(a-2)(d-2) = d(u,v) \leq \diam(R_a) \leq 2(d+3)+(a-2)(d-2)
$$

The lower bound comes from considering a shortest path in $G$ from $u$ to $v$ (its length is $d$): $(u,u',\dots,v',v)$, and noting that concatenating the paths $(u,u',\dots, v')$ in copy $1$, $(v',\dots, u')$ in copy $2$, $(u',\dots, v')$ in copy $3$, and so on provides a shortest path of length $2(d-1)+(a-2)(d-2)$ from the $u$ of $R_a$ to the $v$ of $R_a$.

The upper bound comes from the observation that removing a vertex $x$ (and all edges incident to it) from $G$ can raise the diameter by at most $3$, since a path in $G$ of length $d$ that passed through the removed vertex $x$ can be replaced by a path of length $d-2+5=d+3$, where the $5$ comes from the fact that the diameter of the boundary of the $600$-cell is $5$.

Thus, we know the diameter of $R_a$ up to constant error, of $8$.
We will freely use the arguments given above for upper and lower bounds on lengths of various paths of interest in our constructions to come. 

A second building block employs the third distinguished vertex $w$:

The $4$-manifold $E$ is obtained by applying the star connected sum construction to $17$ copies of $M$ (of $T_M$ to be precise) according to the following scheme:
\begin{figure}[htbp]
\begin{center}
\includegraphics[scale=0.3]{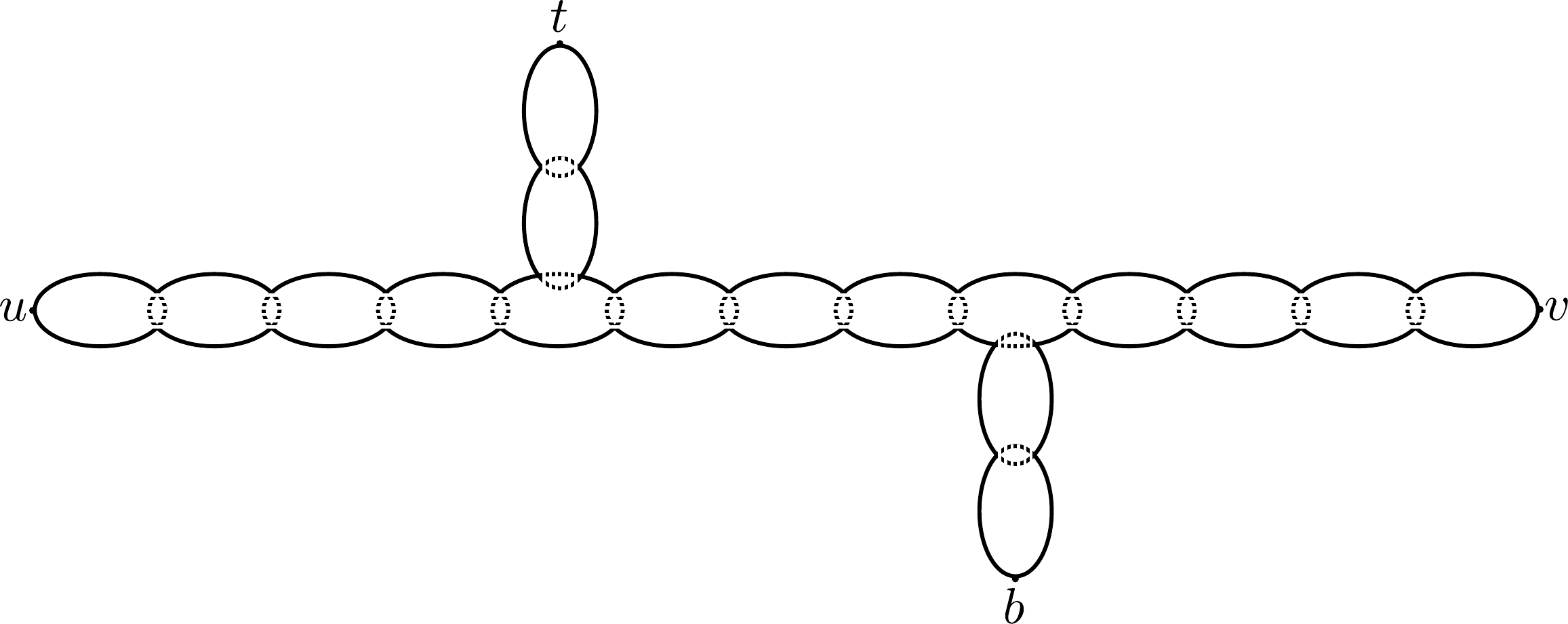}
\end{center}
\end{figure}

Here, the horizontal row is $R_{13}$; we glue the link of $w$ of the $5$th (resp. $9$th) copy of $M$ in $R_{13}$ to the link of $u$ of the copy of $M$ above (resp. below) it in the figure; and glue 
the link of $v$ of these later upper (resp. lower) copies of $M$ to the link of $v$ of the copy of $M$ above (resp. below) it. 
This triangulation $E$ has $4$ distinguished vertices: its $u,v,t$ and $b$. Here $u$ (resp. $v$) of $E$ is the $u$ (resp. $v$) of the row $R_{13}$, and $t$ (resp. $b$) of $E$ is the $u$ of the top (resp. bottom) copy of $M$. Employing similar arguments as given above for $\diam(R_a)$, we note here that
$$
\diam(E)\leq 2(d+3)+2(d+1)+9(d-2)=13d-10
$$
(this is almost the same computation as for $R_{13}$, except the fact that the distance between the links of $u$ and $v$ in the $5$th and $9$th copies of $M$ in the $R_{13}$ can grow by at most $3$ because of the removed vertex $w$).


Let $E_1,\dots,E_k$ be $k$ copies of $E$, and denote the corresponding four  distinguished vertices of $E_i$ by $u_i,v_i,t_i,b_i$. We define a ``row" of star connected sums:
$$
N_k:=R_{15k}\#E_1\#\dots\#E_k\#R_{14k},
$$
where, as in the definition of a row $R_a$,  we glue the link of the ``free" vertex $v$ or $u$ of each but the last manifold in the row of star connected sums to the link of the same-labeled vertex of the next manifold in the row, via the identity map.

Let $I_k$ be the induced subcomplex of $N_k$ obtained by removing the vertices of the left $R_{15k}$ and the right $R_{14k}$ which are not in $E_1$ nor in $E_k$. As $I_k$ is actually a row of $k$ copies of $E$, with $2$ vertices removed (the $u$ of the first $E$ and the $v$ of the last $E$), we have
\begin{align*}
\diam(I_k)&\leq \diam(E_1\#\dots\#E_k)+6\\
&\leq 2(13d-7)+(k-2)(13d-12)+6 = 13kd-12k+16
\end{align*}



\subsection*{Step 2: Attaching handles to $N_k$ to get $M_k$}
Given a permutation $\sigma\in S_k$, we attach $k$ $1$-handles to $N_k$ and obtain a manifold $M_k$ with a fns triangulation $T_{\sigma}$. Explicitly, for each $1\leq i\leq k$ we glue a copy of $H$ (as in the figure below) along its $u$ to $t_i$ of $N_k$, and along its $v$ to $b_j$ of $N_k$, where $\sigma(i)=j$. We construct $H$ by gluing the link of $w$ of the bottom copy of $M$ to the link of $u$ of the copy of $M$ above it, and then gluing the link of $v$ of the latter to the link of $v$ of the the top copy of $M$. 


\begin{figure}[htbp]
\begin{center}
\includegraphics[scale=0.4]{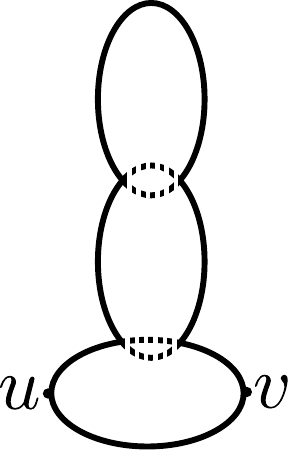}
\end{center}
\end{figure}

By Lemmas~\ref{lem:star-connected-sum} and~\ref{lem:handle-connected-sum}, all the fns complexes $T_\sigma$ thus obtained, for $\sigma\in S_k$, triangulate the same topological manifold, denoted by $M_k$.

\subsection*{Step 3: Reconstructing $\sigma$ from $T_{\sigma}$}
To show that all $k!$ triangulations of $M_k$ are distinct, namely pairwise non-isomorphic, we assume that we are given a triangulation $T\in\set{T_{\sigma}}{\sigma\in S_k}$, and we determine the unique $\sigma\in S_k$ for which $T=T_{\sigma}$.

Let $G=G(T)$ be the graph of $T$, and let $s,e\in V(G)$ be two vertices diametrically apart in $G$. 



We argue that one of $s,e$ (say $s$) must lie in the first copy of $M$ of the left $R_{15k}$, and the other ($e$ then) must lie in the last copy of $M$ of the right $R_{14k}$. Indeed, note that for every permutation $\sigma\in S_k$, by removing from $T_\sigma$ the vertices of the left $R_{15k}$ and the vertices of the right $R_{14k}$ that do not lie in $E_1$ nor in $E_k$, we obtain a subcomplex $Q_k$. The diameter of $Q_k$ does not exceed the diameter of $I_k$ (introduced towards the end of Step 1) because $Q_k$ is basically $I_k$ with some handles added to it that can act like shortcuts between far away vertices. So the diameter of $Q_k$ is at most $13dk$, smaller than the diameter of $R_{14k}$ with a vertex removed. Thus, $s$ and $e$ that realize the diameter of $G$ must be located as claimed.\footnote{The permutations that make $s$ and $e$ as above the closest, are those mapping $1$ to $k$, and their inverses.}

A key ingredient for the reconstruction is the determination of the junctions. A copy of $M$ is called a \textbf{junction} if it has been glued on all of its three distinguished vertices $u,v$ and $w$. A junction in a copy of $H$ is called a \textbf{handle-junction}. To identify the junctions of $T$, consider for every vertex $x\in V(G)$ the \textbf{sphere of radius  $\left\lceil\frac{3}{2}d\right\rceil$ centered at $x$}:

$$
\s_x=\set{y\in V(G)}{d(y,x)=\left\lceil\frac{3}{2}d\right\rceil}.
$$

Let $B_l(x)$ denote the set of vertices in $G$ of graph distance at most $l$ from $x$. Note that $\s_x$ is the boundary of $B_{\left\lceil\frac{3}{2}d\right\rceil}(x)$.




\begin{lemma}\label{lem:3components}
Let $d\ge 77$.
\begin{enumerate}
\item If $x$ is in a junction then $\s_x$ is partitioned into three sets, called \textbf{clusters}:
$$\s_x=C_1\cup C_2\cup C_3$$ where two vertices from the same cluster have distance at most $d+18$, and two vertices from different clusters have distance at least $\frac{3}{2}d-20$ (which is larger then $d+18$ by our assumption on $d$).
\item Assume $x$ is in a junction $J$. Then $J$ is a handle-junction if and only if $G\setminus B_{\left\lceil\frac{3}{2}d\right\rceil}(x)$ is disconnected with one small component (of at most $2|V(T_M)|$ vertices), and a second component containing all other vertices. 
\end{enumerate}
\end{lemma}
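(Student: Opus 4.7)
Let $r=\lceil 3d/2\rceil$ and let $J$ be the junction containing $x$. After the star-connected-sum identifications, the three distinguished vertices $u,v,w$ of $J$ have been removed but their links $L_u,L_v,L_w\subset V(J)$ (each a copy of the boundary of the $600$-cell, hence of $120$ vertices and graph diameter $\le 5$) remain, shared with three adjacent copies $J_u,J_v,J_w$. The three ``arms'' $R_u,R_v,R_w$ emanating from $J$ through these links are weakly separated in the sense that, in both $N_k$ and $M_k$, any path between vertices on distinct arms avoiding $V(J)$ must traverse $\Omega(kd)$ edges (going around through many copies of $T_M$ in the main row, or via another handle), and so such a detour has length $\gg r$ for $d\ge 77$.

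Plan for part~(1). Since $\diam(J)=d<r$, no vertex of $J$ lies on $\s_x$, so each $y\in\s_x$ lies beyond some $L_z$; assign $y$ to $C_z$ according to which link a shortest $x$-to-$y$ path crosses (breaking ties arbitrarily). For the intra-cluster bound, fix $y_1,y_2\in C_u$ and let $\alpha=d(x,L_u)$. Any shortest $x\to y_i$ path enters $L_u$ at some $z_i$ with $d(x,z_i)\in[\alpha,\alpha+5]$, hence $d(L_u,y_i)\in[r-\alpha-5,\,r-\alpha]$, a window of width $\le 5$. A case analysis on where $r-\alpha$ sits relative to $d-2$ (the distance between opposite links inside one copy of $T_M$) shows that $C_u$ lies either entirely inside the single copy $J_u$, entirely inside its successor copy $J_u'$ along the arm, or straddles the shared link $L'$ between $J_u$ and $J_u'$ within graph distance $5$. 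In the first two cases $d(y_1,y_2)\le\diam(T_M)=d$; in the straddle case $d(y_1,y_2)\le 5+\diam(L')+5\le 15$. Either way $d(y_1,y_2)\le d+18$. For the inter-cluster bound, take $y_1\in C_u,y_2\in C_v$: any path in $G$ from $y_1$ to $y_2$ either passes through $V(J)$ crossing both $L_u$ and $L_v$, giving
\[ d(y_1,y_2)\ge d(y_1,L_u)+d_{V(J)}(L_u,L_v)+d(L_v,y_2)\ge(r-d-5)+(d-2)+(r-d-5)=2d-12, \]
using $d_{V(J)}(L_u,L_v)\ge d(u,v)-2=d-2$ and the lower bound $d(y_i,L_{z_i})\ge r-d-5$ above, or else detours through $R_w$, a handle, or the main row, incurring length $\gg r$. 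Either way $d(y_1,y_2)\ge 2d-12>3d/2-20$ for $d\ge 77$.

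Plan for part~(2). If $J$ is a non-handle junction (the $5$th or $9$th copy of some $E_i$'s horizontal row $R_{13}\subset N_k$), each of its three arms continues beyond $B_r(x)$ into a large region of $G$ containing many copies of $T_M$: two arms extend along the main row with $\gg r$ copies of $T_M$ remaining on either side, and the third continues through the $E_i$-arm to a handle and beyond. Depending on $\sigma$, the graph $G\setminus B_r(x)$ may be connected or split into a few large components, but in every case each component contains at least one full copy of $T_M$ outside the ball (plus possibly a partial copy attached to it), hence has size strictly greater than $2|V(T_M)|$; no small component exists. If $J$ is a handle-junction ($J\subset H$ for some handle $H$), then since $H$ has at most $3|V(T_M)|$ vertices in total, the ball $B_r(x)$ covers essentially all of $H$ plus enough of the adjacent $T_M$-copies in $N_k$ around the attachment vertices $t_i,b_j$ to disconnect the at-most-$2|V(T_M)|$ residual vertices of $H$ from the main body; these form the small component, and the rest of $G$ forms the large one.

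The main obstacle will be the intra-cluster bound in part~(1): one must carefully verify that the width-$\le 5$ fluctuation in entry points $z\in L_u$ never causes $C_u$ to span more than one copy of $T_M$ except in the narrow straddle regime where all of $C_u$ stays within distance $5$ of a single shared link. The remaining work — graph-distance estimates for the inter-cluster bound and the component-size analysis for part~(2) — is elementary bookkeeping, relying on the ``no short-cut'' property of $N_k$ and $M_k$ guaranteed by the large minimal displacement of the subgroup $K$ used to construct $M$.
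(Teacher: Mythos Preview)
Your overall architecture matches the paper's, but two of your distance estimates in part~(1) are genuinely wrong, not just imprecise.

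\textbf{Intra-cluster bound.} Your trichotomy ``all of $C_u$ in $J_u$, all in $J_u'$, or all within distance $5$ of the shared link $L'$'' is false. When the depth window $[r-\alpha-5,r-\alpha]$ straddles $d-2$, you can have $p\in C_u\cap J_u$ lying anywhere in $J_u$ (its distance to $L_u$ is large, but there is no reason it must be close to $L'$: being at depth $\approx d-3$ from $L_u$ does not force proximity to $L'$, since $J_u$ has diameter $\approx d$ and vertices can be far from both boundary links). What the paper actually proves is asymmetric: if some $p\in C_u$ lies in the first copy $D_1$ then $d(x,L_u)\ge \lceil d/2\rceil-8$, and this forces any $q\in C_u\cap D_4$ to satisfy $d(q,L')\le 10$; one then bounds $d(p,q)\le d(p,L')+5+d(L',q)\le (d+3)+5+10=d+18$. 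Your $\le 15$ bound in the straddle case does not hold.

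\textbf{Inter-cluster bound.} You compute only the $C_u,C_v$ case and use $d_{V(J)}(L_u,L_v)\ge d-2$. But the junction has \emph{three} glued vertices $u,v,w$, and $w$ lies roughly midway between $u$ and $v$, so $d(L_u,L_w)$ and $d(L_v,L_w)$ are only about $d/2-2$, not $d-2$. The binding case for the lemma is a pair involving $C_w$, and that is exactly where the bound $\tfrac{3}{2}d-20$ (rather than your $2d-12$) comes from: $(\tfrac12 d-9)+(\tfrac12 d-2)+(\tfrac12 d-9)$. Your argument as written does not cover this case at all. (A minor related slip: $\diam(J)$ is not $d$ but at most $d+9$, since three vertices have been removed; this affects your lower bound $r-d-5$ on $d(y_i,L_{z_i})$.)

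Part~(2) is essentially the paper's argument, though your phrasing ``$B_r(x)$ covers essentially all of $H$'' is misleading: the point is precisely that it does \emph{not} cover the top two copies of $M$ in $H$, and those leftover vertices (at most $2|V(T_M)|$) form the small component.
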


\begin{proof}
Let us consider the $2d-4$ neighborhood $N$ of a junction $J$, namely $N$ is the induced subgraph of $G$ consisting of all vertices of graph distance at most $2d-4$ from some vertex of $J$. Denote the copies of $M$, and the separating $600$-cells boundaries where their gluings (i.e. star connected sum) occurred, as in the figure below:

\begin{figure}[htbp]
\begin{center}
\includegraphics[scale=0.5]{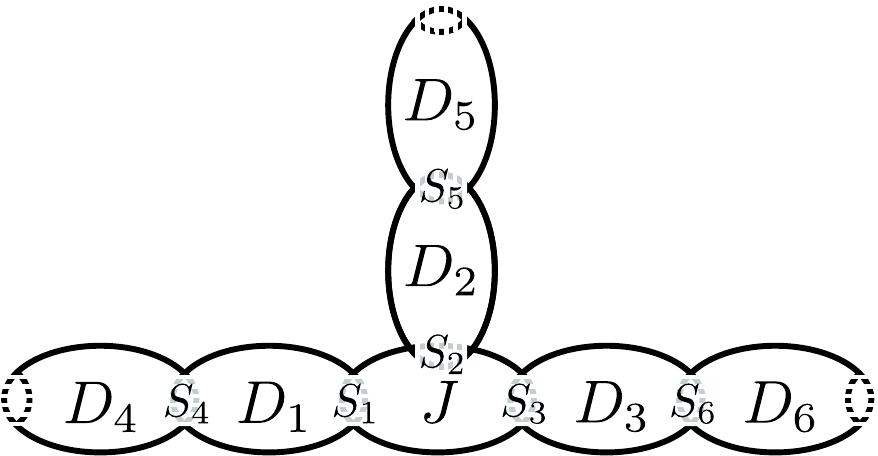}
\end{center}
\end{figure}

Since $x\in J$ is at distance at most $d+9$ from any vertex of $S_1\cup S_2\cup S_3$ (recall this distance estimate comes from the fact that the diameter of the $600$-cell graph is $5$), 
each vertex in $\s_x$ must lie outside $J$, at least $\frac{3}{2}d-(d+9)=\frac{1}{2}d-9$ away from $J$,
 for $d\ge 19$.
 On the other hand, any vertex of $N$ not in $J$ 
must belong to $D_1\cup\dots\cup D_6$, hence the sphere $\s_x$ consists of the three light blue arcs in the figure below:
\begin{figure}[htbp]
\begin{center}
\includegraphics[scale=0.5]{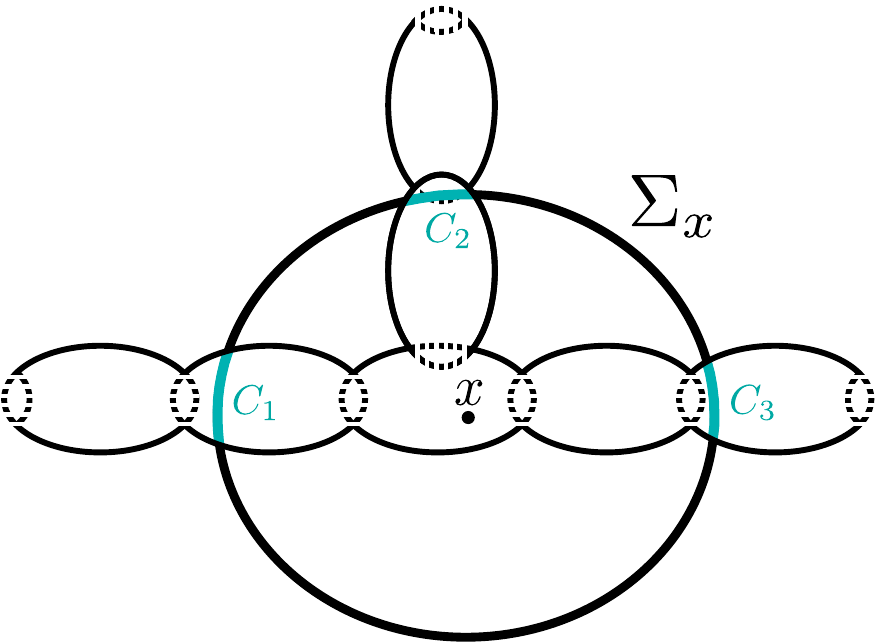}
\end{center}
\end{figure}

which satisfy the conditions in item (1) for clusters:
\begin{align*}
C_1&=\s_x\cap\left(D_1\cup D_4\right)\\
C_2&=\s_x\cap\left(D_2\cup D_5\right)\\
C_3&=\s_x\cap\left(D_3\cup D_6\right)
\end{align*}

Indeed, consider two vertices $p,q$ in $C_1$. If both belong to $D_1$ or both belong to $D_4$, then they are at most $d+6$ away from each other. 

Suppose $p\in D_1$ and $q\in D_4$. First we show that because $p\in D_1$, $x$ cannot be too close to $S_1$. Since every shortest path from $p$ to $x$ must pass through $S_1$, we have $\left\lceil\frac{3}{2}d\right\rceil=d(p,x)\le d(p,S_1)+\diam(S_1)+d(S_1,x)$. Using $d(p,S_1)\le d+3$ and $\diam(S_1)=5$, we get $\left\lceil\frac{d}{2}\right\rceil-8\le d(x,S_1)$. 

Now since every shortest path from $x$ to $q$ must pass through $S_1$ and $S_4$, we have $\left\lceil\frac{3}{2}d\right\rceil=d(q,x)\ge d(x,S_1)+d(S_1,S_4)+d(S_4,q)$. Using $d(S_1,S_4)=d-2$ and $d(x,S_1)\ge \left\lceil\frac{d}{2}\right\rceil-8$, we get $d(S_4,q)\le 10$. 

Finally, since every shortest path from $p$ to $q$ must pass through $S_4$, we have $d(p,q)\le d(p,S_4)+\diam(S_4)+d(S_4,q)$. Now, using $d(p,S_4)\le d+3$, $d(S_4,q)\le 10$, and $\diam(S_4)=5$, we get $d(p,q)\le d+18$.


The same argument works for $C_2$ and $C_3$.

Next, consider two vertices from different clusters, say $u_1\in C_1$ and $u_2\in C_2$. Since a shortest path from $u_1$ to $u_2$ must pass through $S_1$ and $S_2$ we have:
\begin{align*}
d(u_1,u_2)&\geq d(u_1,S_1)+d(S_1,S_2)+d(S_2,u_2)\\
&\geq \left(\frac{1}{2}d-9\right)+\left(\frac{1}{2}d-2\right)+\left(\frac{1}{2}d-9\right)=\frac{3}{2}d-20.
\end{align*}
Similarly, the same estimate holds for the other two choices of a pair of clusters $C_i$.
This completes the proof of $(1)$.

To prove $(2)$ notice that if $x$ is in a handle-junction $J$ then $G\setminus B_{\left\lceil\frac{3}{2}d\right\rceil}(x)$ is indeed composed of two connected components: the top part of an $H$, and the rest of the graph $G$. While, if $J$ is a non-handle-junction then either $G\setminus B_{\left\lceil\frac{3}{2}d\right\rceil}(x)$ is connected, or $G\setminus B_{\left\lceil\frac{3}{2}d\right\rceil}(x)$ has two ``large" components: one contains the left $R_{15k}$, while the other contains the right $R_{14k}$; each has more than $2|V(T_M)|$ vertices.
\end{proof}


\begin{remark}\label{rem:middleVertex}
The conclusion in item (1) of Lemma~\ref{lem:3components} could hold even for vertices not in a junction but only close to a junction (for instance, vertices in $D_1$, close to $S_1$). Call these \textbf{junction vertices}. We can still identify a vertex that is actually \textbf{in} a junction by considering a maximal sequence of consecutive junction vertices along an induced path in $G$ passing from left to right (i.e., through the sequence of components $D_4,D_1,J,D_3,D_6$) and taking the middle vertex in this sequence.

\end{remark}

Note that for every path $P$ from $s$ to $e$ that intersects each $E_i$ only in its row part, each vertex $x\in P$ satisfies that $G\setminus B_{\left\lceil\frac{3}{2}d\right\rceil}(x)$ is either connected or has two large connected components, each of size larger than $2|V(T_M)|$. Furthermore, every shortest path from $s$ to $e$ among those \emph{not} intersecting any copy of $H$ is of the same form as $P$ above.  
Thus, using Lemma~\ref{lem:3components}, we may identify a shortest path $\pi$ from $s$ to $e$ among those that do not pass through handle-junctions:
$$\pi=\left(s=p_1,p_2,\dots,p_r=e\right).$$ 
Walking along $\pi$, and considering the sequences of spheres $\left(\s_{p_i}\right)_{i=1}^r$ we will encounter a  subsequence of consecutive junction vertices (maximal w.r.t. inclusion) after about $15k(d-2)$ steps 
(for $k$ large, up to additive error depending only on $d$).
Its middle vertex  $u_1$ is in a junction $J_{1,1}$ of $E_1$; see Remark~\ref{rem:middleVertex}. 
Continuing the sequence of spheres we will have a short junction-free subsequence, followed by a second (maximal) subsequence of consecutive junction vertices --  its middle vertex  $v_1$ is in a junction $J_{2,1}$ of $E_1$. 


Proceeding in this manner, we distinguish the vertices 
$$u_1,v_1,u_2,v_2,\dots,u_k,v_k$$ 
where $u_i$ lies in junction $J_{1,i}$ of $E_i$, and $v_i$ lies in junction $J_{2,i}$ of $E_i$, for each $1\leq i\leq k$.

We can now recover $\sigma$:


\begin{lemma} Let $d\ge 
77$.
For each $1\leq i,j\leq k$ we have:
\begin{enumerate}
\item If  $\sigma(i)=j$, then $d(u_i,v_j)\leq 7d+51$.

\item If  $\sigma(i)\neq j\neq i$, then $d(u_i,v_j)>8d-16$. 
\end{enumerate}
Thus, for all $1\leq i\leq k$, if there exists $j\neq i$ such that $d(u_i,v_j)\leq 7d+51$ then this $j$ is unique and 
$\sigma(i)=j$, else $\sigma(i)=i$.
\end{lemma}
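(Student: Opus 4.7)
The plan is to establish (1) by constructing an explicit short path from $u_i$ to $v_j$ and (2) by case analysis on the routing of an arbitrary path; the concluding sentence of the lemma is then immediate, since $7d+51<8d-16$ for all $d\ge77$ forces $\sigma(i)$ to be the unique $j\ne i$ with $d(u_i,v_j)\le 7d+51$ (or $\sigma(i)=i$ if no such $j$ exists). The main obstacle is the sharp constant in (1): the naive count $d(u_i,t_i)+d(t_i,b_j)+d(b_j,v_j)\le 3d+5d/2+3d=17d/2$ is too loose, and one needs to trim the ascent and descent using the fact that $u_i$ sits near $\lk(w_{J_{1,i}})$.

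For (1), since $\sigma(i)=j$, the handle $H_i$ connects $t_i\in E_i$ to $b_j\in E_j$, and I build a three-segment path: ascend from $u_i\in J_{1,i}$ through $E_i$'s upper vertical column (two copies $U_1,U_2$ of $M$ glued by $w$--$u$ and then $v$--$v$) to a link vertex near $t_i$; cross $H_i$ through its three copies $M_b,M_m,M_t$; descend symmetrically through $E_j$'s lower vertical column to $v_j\in J_{2,j}$. Segment lengths will be bounded by summing the intrinsic distances $d(u,v)=d$, $d(u,w)=\lceil d/2\rceil$, $d(v,w)=\lfloor d/2\rfloor$ in each copy of $M$ traversed, with $O(1)$ corrections at each gluing where the identified vertex is removed (a neighboring link vertex serves as the bridge). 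The key observation that makes the ascent $\le 2d+O(1)$ rather than $3d$ is that $u_i$, defined as the middle vertex of the maximal junction-vertex subsequence of $\pi$ inside $J_{1,i}$, lies near $\lk(w_{J_{1,i}})$: in the hyperbolic tessellation the midpoint of a $u$--$v$ geodesic of $M$ is close to $w$, since $w$ is already at combinatorial distance $\lceil d/2\rceil$ from both $u$ and $v$. Adding the three segments yields $\le 2d+5d/2+2d+O(1)=13d/2+O(1)\le 7d+51$, as desired.

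For (2), with $\sigma(i)\ne j$ and $i\ne j$, the source $u_i\in E_i$ and the target $v_j\in E_j$ sit in distinct $E$-blocks. I split on whether the path uses any handle. In the first case, the path stays in the skeleton $N_k$, so it must traverse the row from $J_{1,i}$ to $J_{2,j}$; even for adjacent blocks ($|i-j|=1$) this crosses at least $12$ copies of $M$ (the four copies of $E_i$'s $R_{13}$ past $J_{1,i}$ plus the eight copies of $E_j$'s $R_{13}$ up to $J_{2,j}$), contributing $\ge 12d-O(1)$. In the second case the path uses at least one handle, but any handle $H_k$ requires $\ge 2d-O(1)$ to ascend to $t_k$ from a nearby junction and $\ge 5d/2-O(1)$ to cross; since $\sigma(i)\ne j$, a single use of $H_i$ lands in $E_{\sigma(i)}\ne E_j$, forcing $\approx 10d$ of additional travel (via the row, or via another handle whose own ascent/descent costs at least as much); handles $H_k$ with $k\ne i$ first require a row traversal of $\ge 12d$ from $E_i$ to $E_k$. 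In every sub-case the total comfortably exceeds $8d-16$, completing (2).
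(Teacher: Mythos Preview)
Your overall plan for both items matches the paper's, but the execution of (1) rests on a misreading of $H$ and an unjustified compensating claim.

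\textbf{The gap in (1).} You assume that ``crossing $H_i$'' means traversing all three copies $M_b,M_m,M_t$, and you budget $5d/2$ for it. In fact, the two distinguished vertices $u$ and $v$ of $H$ both lie on the \emph{bottom} copy $M_b$; the middle and top copies form a pendant decoration attached at $w$ of $M_b$. (This is precisely why a handle-junction has a small component of size at most $2|V(T_M)|$ in Lemma~\ref{lem:3components}(2): that component is $M_m\cup M_t$.) Consequently, passing from $\lk(t_i)$ to $\lk(b_j)$ through $H$ means crossing a single copy of $M$ with three vertices removed, costing at most $d+9$, not $5d/2$. With the correct cost for $H$, the paper's path is simply
\[
\underbrace{(d+9)}_{J_{1,i}} + \underbrace{2(d+6)}_{\text{upper arm of }E_i} + \underbrace{(d+9)}_{M_b\text{ of }H} + \underbrace{2(d+6)}_{\text{lower arm of }E_j} + \underbrace{(d+9)}_{J_{2,j}} = 7d+51,
\]
using only that $u_i$ is \emph{some} vertex of $J_{1,i}$. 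Your ``key observation'' that $u_i$ lies near $\lk(w_{J_{1,i}})$ is not established: $u_i$ is the middle vertex of a maximal run of junction vertices along $\pi$, which only guarantees $u_i\in J_{1,i}$ (Remark~\ref{rem:middleVertex}); nothing forces the midpoint of a $u$--$v$ geodesic to be combinatorially close to the specific vertex $w$, and you would need a genuine argument here. Once the structure of $H$ is read correctly, this claim is unnecessary.

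\textbf{The count in (2).} Your $4+8=12$ is off. In the tightest case $j=i-1$, the row segment from $J_{1,i}$ (the $5$th copy in $E_i$) to $J_{2,j}$ (the $9$th copy in $E_{i-1}$) crosses copies $4,3,2,1$ of $E_i$ and $13,12,11,10$ of $E_{i-1}$, i.e.\ exactly eight consecutive non-junction copies, giving $\ge 8(d-2)=8d-16$. The paper's dichotomy is also cleaner than your handle case split: any $u_i$--$v_j$ path either crosses the eight row copies separating two adjacent $E$-blocks (length $\ge 8(d-2)$), or it avoids every such stretch and must therefore use at least two handles (length $\ge 13(d-2)$); since $\sigma(i)\ne j$, a single handle cannot bridge $E_i$ to $E_j$ while avoiding all inter-block row segments.
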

\begin{proof}
As discussed above, for $d\ge 77$ the junction vertices $u_i\in J_{1,i}$ and $v_j\in J_{2,j}$ are well defined for $1\le i,j\le k$. 
If $\sigma(i)=j$, then consider a shortest path from $u_i$ to $v_j$ among those that pass through the handle $H$ connecting the copy of $M$ containing $t_i$ to the copy of $M$ containing $b_j$. Such a path starts with $u_i$,  crosses the junction $J_{1,i}$ of $E_i$ along at most $d+9$ edges, then crosses the next two copies of $M$ reaching the handle $H$ along at most $2(d+6)$ edges, then crosses $H$ along at most $d+9$ edges, then crosses the next two copies of $M$ reaching the junction $J_{2,j}$ of $E_j$ along at most $2(d+6)$ edges, and continues in this $J_{2,j}$ to $v_j$ along at most $d+9$ edges. Altogether this path has length at most $7d+51$, proving item (1).

Next, assume  $\sigma(i)\neq j\neq i$.
Then, a shortest path from $u_i$ to $v_j$ either crosses at least eight consecutive non-junction copies of $M$ in a row -- passing from some $E_k$ to $E_{k\pm 1}$ -- thus such path has length at least $8(d-2)$, or it passes  through at least two handles, yielding a path of length at least $13(d-2)$. This proves item (2).




As $d\ge 77$, then $8(d-2)>7d+51$, and the recovery of the unique permutation $\sigma$ for which $T=T_\sigma$ is complete. 

\end{proof}

To finish the proof of Theorem~\ref{cor:count-fns}(i) let us observe that the Euler characteristic of $M_k$ is indeed linear in $k$: denote $n=\chi(M)$. First, notice that attaching a handle $H$ contributes $\chi(H)-4=3n-8$ to $\chi$, by Lemmas~\ref{lem:star-connected-sum}(ii) and~\ref{lem:handle-connected-sum}(ii). Thus:
$$
\chi(M_k)=\chi(N_k)+k(3n-8)
$$
Now, we compute $\chi(N_k)$ using Lemmas~\ref{lem:star-connected-sum}(ii) and~\ref{lem:handle-connected-sum}(ii):
\begin{align*}
\chi(N_k)&=\chi(R_{15k}\#E_1\#\dots\#E_k\#R_{14k})\\
&=\chi(R_{15k})+k\chi(E)+\chi(R_{14k})-2(k+1)\\
&=\left(15kn-2(15k-1)\right)+k\left(17n-2\cdot 16\right)+\left(14kn-2(14k-1)\right)-2(k+1)\\
&=(46n-92)k+2
\end{align*}
so we conclude
$$
\chi(M_k)=(46n-92)k+2+k(3n-8)=(49n-100)k+2=\Theta(k)
$$
as claimed.

\section{Concluding remarks}\label{sec:Open}
In view of Corollary~\ref{cor:highEvenXhi}, we ask:
\begin{prob}
Is there a $4$-manifold of \emph{odd} Euler characteristic that admits a fns triangulation?
\end{prob}
If the answer is Yes, with a construction admitting a vertex whose link is isomorphic to the boundary of the 600-cell, then gluing it to the fns manifolds $G_s$ of Corollary~\ref{cor:highEvenXhi} via a star connected sum along the link of such vertex, would yield connected fns $4$-manifolds realizing every large enough integer as their Euler characteristic. 

In view of Theorem~\ref{cor:count-fns}, we ask:
\begin{prob}
Is the number of fns triangulations of a $4$-manifold $M_i$ super-factorial in $\chi(M_i)$ for a suitable sequence of manifolds $(M_i)_i$?
\end{prob}
Similarly, can the upper bound of Theorem~\ref{cor:count-fns} on $t(M_i)$ be improved?
Is $t(x)$ of larger order of magnitute than any $t(M)$ where $\chi(M)=x$ and $x$ tends to infinity?

Next, we consider the piecewise linear structure of our constructed manifolds:
\begin{prob}
Are the different combinatorial triangulations $T_\sigma$ of the $4$-manifold $M_k$ we get in Theorem~\ref{cor:count-fns} PL-homeomorphic?
\end{prob}

\bibliographystyle{alpha}
\bibliography{star}

\begin{thebibliography}{{M}atb}

\bibitem[Cox73]{Cox73}
H.~S.~M. Coxeter.
\newblock {\em Regular Polytopes}.
\newblock Dover Publications, 1973.

\bibitem[Dra99]{Dran99}
A.N. Dranishnikov.
\newblock Boundaries of coxeter groups and simplicial complexes with given links.
\newblock {\em Journal of Pure and Applied Algebra}, 137(2):139--151, 1999.

\bibitem[GHS02]{GyarHS02}
Andr{\'a}s Gy{\'a}rf{\'a}s, Alice Hubenko, and J{\'o}zsef Solymosi.
\newblock Large cliques in {{\(C_4\)}}-free graphs.
\newblock {\em Combinatorica}, 22(2):269--274, 2002.

\bibitem[Hat02]{Hat02}
Allen Hatcher.
\newblock {\em Algebraic topology}.
\newblock Cambridge University Press, 2002.

\bibitem[JS03]{JS03}
Tadeusz Januszkiewicz and Jacek Swiatkowski.
\newblock Hyperbolic coxeter groups of large dimension.
\newblock {\em Commentarii Mathematici Helvetici}, 78:555--583, 07 2003.

\bibitem[KPP09]{KPP09}
Eryk Kopczynski, Igor Pak, and Piotr Przytycki.
\newblock Acute triangulations of polyhedra and $\mathbb{R}^n$.
\newblock {\em Combinatorica}, 32, 09 2009.

\bibitem[Mal65]{m40}
A.I. Malcev.
\newblock On the faithful representation of infinite groups by matrices.
\newblock {\em American Mathematical Society Translations}, 45(2):1--18, 1965.

\bibitem[{M}ata]{mathoverflow:LeeMosh}
{M}athoverflow.
\newblock Connect sum of topological manifolds \url{https://mathoverflow.net/questions/121571/connected-sum-of-topological-manifolds}.

\bibitem[{M}atb]{mathoverflow:RobertBell}
{M}athoverflow.
\newblock How to prove the connected sum of two closed aspherical n-manfolds ($n>2$) is not asperical? \url{https://mathoverflow.net/questions/46874/how-to-prove-the-connected-sum-of-two-closed-%aspherical-n-manfolds-n-2-is-not}.

\bibitem[McC81]{McCullough}
Darryl McCullough.
\newblock Connected sums of aspherical manifolds.
\newblock {\em Indiana Univ. Math. J.}, 30(1):17--28, 1981.

\bibitem[PS09]{PS09}
Piotr Przytycki and Jacek Swiatkowski.
\newblock Flag-no-square triangulations and gromov boundaries in dimension $3$.
\newblock {\em Groups, Geometry, and Dynamics}, 3:453--468, 2009.

\bibitem[Sel60]{s60}
A.~Selberg.
\newblock On discontinuous groups in higher-dimensional symmetric spaces.
\newblock {\em Contributions to function theory (Internat. Colloq. Function Theory, Bombay, 1960)}, pages 147--164, 1960.

\end{thebibliography}

\end{document}